\theoremstyle{plain}
\newtheorem{theorem}{Theorem}[section]
\newtheorem{proposition}[theorem]{Proposition}
\newtheorem*{theorem*}{Theorem}
\newtheorem*{proposition*}{Proposition}
\newtheorem{corollary}[theorem]{Corollary}
\theoremstyle{definition}
\newtheorem{definition}[theorem]{Definition}
\newtheorem{ex}[theorem]{Example}
\theoremstyle{remark}
\newtheorem{remark}{Remark}[section]
\DeclareMathOperator{\id}{Id}
\tikzset{greynode/.style={circle,fill=gray!50,minimum size=0.4cm,inner sep=0pt},}
\tikzset{rednode/.style={circle,fill=black!100,minimum size=0.4cm,inner sep=0pt},}
\title{Interlacing Results for Hypergraphs}
\author{Raffaella \textsc{Mulas}$^{1,2}$}
\begin{document}
\maketitle
\section{Introduction}
Hypergraphs are a generalization of graphs in which edges can connect any number of vertices. They allow the modeling of bitcoin transactions \cite{bitcoin}, quantum entropies \cite{entropy}, chemical reaction networks \cite{JM2019}, cellular networks \cite{KlamtHausTheis}, social networks \cite{ZhangLiu}, neural networks \cite{Curto}, opinion formation \cite{LanchierNeufer}, epidemic networks \cite{BodoKatonaSimon}, transportation networks \cite{Andreotti}. Moreover, \emph{hypergraphs with real coefficients} have been introduced in \cite{JM2020} as a generalization of classical hypergraphs where, in addition, each vertex--edge incidence is given a real coefficient. These coefficients allow to model, for instance, the stoichiometric coefficients when considering chemical reaction networks, or the probability that a given vertex belongs to an edge. In \cite{JM2020}, also the adjacency matrix and the normalized Laplacian associated to such hypergraphs have been introduced, while the corresponding Kirchhoff Laplacian has been introduced in \cite{Yuji}.\newline
Here we study the spectral properties of these operators and we prove, in particular, interlacing results. We show that, given an operator $\mathcal{O}$ (which is either the adjacency matrix, the normalized Laplacian or the Kirchhoff Laplacian), then the eigenvalues of the operator $\mathcal{O}(G)$ associated to a hypergraph $G$ interlace the eigenvalues of $\mathcal{O}(G')$, if $G'$ is obtained from $G$ by deleting vertices or edges. We also prove the tightness of these inequalities.\newline
Since spectral theory studies the qualitative properties of a graph ---\,and, more generally, of a hypergraph\,--- that can be inferred by the spectra of its associated operators, interlacing results are meaningful as they offer a measure of how much a spectrum changes when deleting vertices or edges. We refer the reader to \cite{Horak,Butler,Reff,Haemers} for some literature on interlacing results in the case of graphs, simplicial complexes and hypergraphs.\newline
The paper is structured as follows. In Section \ref{section:Definitions} we offer an overview of the definitions on hypergraphs that will be needed throughout this paper. In Section \ref{section:MinMax} we recall the Courant--Fischer--Weyl min-max principle and we apply it to characterize the eigenvalues of the adjacency matrix, normalized Laplacian and Kirchhoff Laplacian associated to a hypergraph. %In Section \ref{section:Cauchy} and Section \ref{Butler} we present our main results.
In Section \ref{section:Cauchy} we apply the Cauchy interlacing Theorem to prove various interlacing results, and in Section \ref{Butler} we prove some additional interlacing results for the normalized Laplacian, using a generalization of the proof method developed by Butler in \cite{Butler}. Finally, in Section \ref{section:last} we draw some conclusions.

\section{Definitions}\label{section:Definitions}
We recall the basic definitions on hypergraphs with real coefficients, following \cite{JM2020}.
	\begin{definition}
			A \emph{hypergraph with real coefficients} (Fig. \ref{fig:hyp}) is a triple $G=(V,E,\mathcal{C})$ such that:
			\begin{itemize}
				\item $V=\{v_1,\ldots,v_n\}$ is a finite set of \emph{nodes} or \emph{vertices};
				\item $E=\{e_1,\ldots,e_m\}$ is a multiset of elements $e_j\subseteq V$ called \emph{edges};
				\item $\mathcal{C}=\{C_{v,e}:v\in V\text{ and }e\in E \}$ is a set of \emph{coefficients} $C_{v,e}\in\mathbb{R}$ and it is such that
				\begin{equation}\label{eq:zerocoeff}
				    C_{v,e}=0 \iff v\notin e.
				\end{equation}
			\end{itemize}
		\end{definition}
 \begin{figure}[tbh]
    \centering
\includegraphics[width=3.5cm]{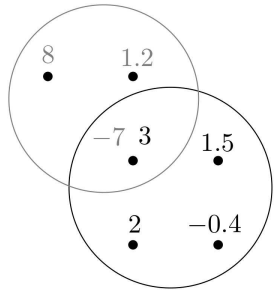}    \caption{A hypergraph with real coefficients that has $6$ vertices and $2$ edges.}
    \label{fig:hyp}
\end{figure}

From here on, we fix an hypergraph with real coefficients $G=(V,E,\mathcal{C})$ and we assume that each vertex is contained in at least one edge, that is, there are no isolated vertices.
\begin{definition}
    Given $e\in E$, its \emph{cardinality}, denoted $|e|$, is the number of vertices that are contained in $e$.
\end{definition}

\begin{remark}
The \emph{oriented hypergraphs} introduced by Reff and Rusnak in \cite{ReffRusnak} are hypergraphs with real coefficients such that $C_{v,e}\in\{-1,0,1\}$ for each $v\in V$ and $e\in E$. \emph{Signed graphs} are oriented hypergraphs such that $|e|=2$ for each $e\in E$, and \emph{simple graphs} are signed graphs such that, for each $e\in E$, there exists a unique $v\in V$ and there exists a unique $w\in V$ satisfying
\begin{equation*}
   C_{v,e}=-C_{w,e}=1.
\end{equation*}Moreover, \emph{weighted hypergraphs} are hypergraphs with real coefficients such that, for each $e\in E$ and for each $v\in e$, $C_{v,e}=:\omega(e)$ does not depend on $v$.
\end{remark}

\begin{definition}
	Given $v\in V$, its \emph{degree} is
	\begin{equation}\label{eq:defdegree}
	\deg (v):=\sum_{e\in E}(C_{v,e})^2.
	\end{equation}
The $n\times n$ diagonal \emph{degree matrix} of $G$ is
\begin{equation*}
    D:=D(G)=\textrm{diag}\bigl(\deg (v_i)\bigr)_{i=1,\ldots,n}.
\end{equation*}
\end{definition}
Note that $D$ is invertible, since we are assuming that there are no isolated vertices.

		\begin{definition}
The $n\times n$ \emph{adjacency matrix} of $G$ is $A:=A(G)=(A_{ij})_{ij}$, where $A_{ii}:=0$ for all $i=1,\ldots,n$ and
    \begin{equation*}
        A_{ij}:=-\sum_{e\in E}C_{v_i,e}\cdot C_{v_j,e}\quad \text{for all }i\neq j.
    \end{equation*}
\end{definition}
		\begin{definition}
The $n\times m$ \emph{incidence matrix} of $G$ is $\mathcal{I}:=\mathcal{I}(G)=(\mathcal{I}_{ij})_{ij}$, where
	\begin{equation*} 
	\mathcal{I}_{ij}:=C_{v_i,e_j}.
	\end{equation*}
\end{definition}

\begin{definition}
The \emph{normalized Laplacian} of $G$ is the $n\times n$ matrix
\begin{equation*}
    L:=L(G)=\id-D(G)^{-1/2}A(G)D(G)^{-1/2},
\end{equation*}
where $\id$ is the $n\times n$ identity matrix.
\end{definition}

\begin{remark}
In \cite{JM2020}, the normalized Laplacian is defined as the $n\times n$ matrix
\begin{equation*}
    \mathcal{L}(G):=\id-D(G)^{-1}A(G),
\end{equation*}which is not necessarily symmetric. Here we chose to work on $L(G)$, which generalizes the classical normalized Laplacian for graphs introduced by Fan Chung in \cite{Chung}, so that we can apply the properties of symmetric matrices. From a spectral point of view, working on $L(G)$ or $\mathcal{L}(G)$ is equivalent. In fact,
\begin{equation*}
    \mathcal{L}(G)=D(G)^{-1/2}L(G)D(G)^{1/2},
\end{equation*}hence the matrices $L(G)$ and $\mathcal{L}(G)$ are similar and, therefore, isospectral.
\end{remark}

The Kirchhoff Laplacian, in the context of hypergraphs with real coefficients, was introduced by Hirono et al.\ \cite{Yuji}. We recall it and we introduce the \emph{dual Kirchhoff Laplacian}.
\begin{definition}
The \emph{Kirchhoff Laplacian} of $G$ is the $n\times n$ matrix
\begin{equation*}
    K:=K(G)=\mathcal{I}(G)\cdot \mathcal{I}(G)^\top=D(G)-A(G).
\end{equation*}
The \emph{dual Kirchhoff Laplacian} of $G$ is the $m\times m$ matrix
\begin{equation*}
    K^*:=K^*(G):=\mathcal{I}(G)^\top \cdot \mathcal{I}(G).
\end{equation*}
\end{definition}
\begin{remark}\label{rmk:K*}
$K(G)$ and $K^*(G)$ have the same non-zero eigenvalues. It follows from the fact that, if $f$ and $g$ are linear operators, then the non-zero eigenvalues of $fg$ and $gf$ are the same.
\end{remark}

Given an $n\times n$ real symmetric matrix $Q$, its \emph{spectrum} consists of $n$ real eigenvalues, counted with multiplicity. We denote them by
\begin{equation*}
    \lambda_1(Q)\leq \ldots \leq \lambda_n(Q).
\end{equation*}
Since the \emph{trace} of an $n\times n$ matrix (i.e.\ the sum of its diagonal elements) equals the sum of its eigenvalues, we have
\begin{equation*}
 \sum_{i=1}^n\lambda_i(A)=0,\quad   \sum_{i=1}^n\lambda_i(L)=n,\quad\mbox{and}\quad \sum_{i=1}^n\lambda_i(K)=\sum_{i=1}^n \deg (v_i).
\end{equation*}
The idea of the interlacing results that we will prove is to show how the removal of part of the hypergraph effects the eigenvalues of its associated operators. We define two operations that can be done for removing part of a hypergraph: the \emph{vertex deletion} and the \emph{edge deletion}, as generalizations of the ones in \cite{Reff}. Similarly, we also define the \emph{restriction} of a hypergraph to a subset of edges.
\begin{definition}
Given $v\in V$, we let $G\setminus v:=(V\setminus \{v\}, E_v,\mathcal{C}_v)$, where:
\begin{itemize}
    \item $E_v:=\{e\setminus \{v\}: e\in E\}$;
    \item $\mathcal{C}_v:=\mathcal{C}\setminus\{C_{v,e}:e\in E\}$.
\end{itemize}
We say that $G\setminus v$ is obtained from $G$ by a \emph{vertex deletion} of $v$.
\end{definition}
\begin{definition}
Given $e\in E$, we let $G\setminus e:=(V, E\setminus \{e\},\mathcal{C}_e)$, where
\begin{equation*}
    \mathcal{C}_e:=\mathcal{C}\setminus\{C_{v,e}:v\in V\}.
\end{equation*}
We say that $G\setminus e$ is obtained from $G$ by an \emph{edge deletion} of $e$. More generally, given $F\subseteq E$, we denote by $G\setminus F$ the hypergraph obtained from $G$ by deleting all edges in $F$.
\end{definition}
\begin{definition}
Given $F\subseteq E$, the \emph{restriction} of $G$ to $F$ is $\left.G\right|_F:=(V_F,F,\left.\mathcal{C}\right|_F)$, where
\begin{itemize}
    \item $V_F:=\{v\in V: v\in e \text{ for some }e\in F\}$ and
    \item $\left.\mathcal{C}\right|_F:=\{C_{v,e}\in \mathcal{C}:v\in V_F \text{ and }e\in F\}$.
\end{itemize}
\end{definition}

\section{Min-max principle}\label{section:MinMax}
We recall the Courant--Fischer--Weyl min-max principle (Theorem 2.1 in \cite{Butler}):
\begin{theorem}[Min-max principle] Let $Q$ be an $n\times n$ real symmetric matrix. Let $\mathcal{X}^k$ denote a $k$--dimensional subspace of $\mathbb{R}^n$ and $\mathbf{x}\bot \mathcal{X}^k$ signify that $\mathbf{x}\bot \mathbf{y}$ for all $\mathbf{y}\in\mathcal{X}^k$. Then
\begin{equation*}
    \lambda_k(Q)=\min_{\mathcal{X}^{n-k-1}}\left(\max_{\mathbf{x}\bot \mathcal{X}^{n-k-1},\, \mathbf{x}\neq 0}\frac{\mathbf{x}^\top Q \mathbf{x}}{\mathbf{x}^\top \mathbf{x}}\right)=\max_{\mathcal{X}^k}\left(\min_{\mathbf{x}\bot \mathcal{X}^k,\,\mathbf{x}\neq 0}\frac{\mathbf{x}^\top Q \mathbf{x}}{\mathbf{x}^\top \mathbf{x}}\right)
\end{equation*}for $k=1,\ldots,n$.
\end{theorem}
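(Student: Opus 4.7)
The plan is to diagonalise $Q$ via the spectral theorem and then analyse the Rayleigh quotient in the resulting eigenbasis. Concretely, let $\mathbf{v}_1,\ldots,\mathbf{v}_n$ be an orthonormal basis of $\mathbb{R}^n$ with $Q\mathbf{v}_i=\lambda_i(Q)\mathbf{v}_i$. For any $\mathbf{x}=\sum_i c_i\mathbf{v}_i\neq 0$ one has
\begin{equation*}
    \frac{\mathbf{x}^\top Q\mathbf{x}}{\mathbf{x}^\top \mathbf{x}}=\frac{\sum_i \lambda_i(Q)\, c_i^2}{\sum_i c_i^2},
\end{equation*}
which is a convex combination of the $\lambda_i(Q)$ and therefore lies between the smallest and largest eigenvalues whose coefficient $c_i$ is non-zero. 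This reduces the whole statement to a question of which eigenvectors $\mathbf{x}$ is allowed to have a component along, since the orthogonality constraint $\mathbf{x}\perp \mathcal{X}^{\ast}$ translates precisely into such a restriction once a convenient basis is chosen.

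I would then prove each of the two characterisations by establishing the two matching inequalities. For the max-min equality, the ``achievable'' direction is obtained by plugging in the specific test subspace spanned by the eigenvectors corresponding to the smallest eigenvalues: on its orthogonal complement every Rayleigh quotient is, by the displayed formula, at least the eigenvalue at which the cut is made. The converse inequality is the heart of the argument: given an \emph{arbitrary} subspace $\mathcal{X}^{k}$, one must exhibit a non-zero vector orthogonal to $\mathcal{X}^{k}$ whose Rayleigh quotient is bounded above by the target eigenvalue. The min-max equality follows by the symmetric argument, or equivalently by applying the max-min statement to $-Q$.

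The main obstacle, and the only genuinely non-routine step, is the dimension-counting argument needed for the converse inequality: one intersects the orthogonal complement of $\mathcal{X}^{k}$ (whose dimension is $n-k$) with a carefully chosen low-eigenvalue span, whose dimension is picked so that the two dimensions sum to strictly more than $n$. This forces the existence of a non-zero common vector, and the convex-combination formula above then yields the required upper bound on its Rayleigh quotient. Everything else — the symmetric role of $\lambda_{\min}$ and $\lambda_{\max}$, and the deduction of the min-max side from the max-min side — is direct inspection in the eigenbasis, so I would invoke the result as cited from \cite{Butler} rather than reproducing the full argument.
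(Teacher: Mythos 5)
Your outline is the standard Courant--Fischer argument (spectral decomposition, convex-combination bound on the Rayleigh quotient, and the dimension-counting intersection for the hard direction), and it is correct; the paper itself gives no proof of this theorem, recalling it verbatim as Theorem 2.1 of \cite{Butler}, so your decision to ultimately invoke the citation matches the paper exactly. One caution if you do write out the dimension count: the formula as stated uses $\mathcal{X}^{n-k-1}$, which is Butler's zero-based eigenvalue indexing ($\lambda_0\leq\ldots\leq\lambda_{n-1}$), whereas this paper indexes $\lambda_1\leq\ldots\leq\lambda_n$; with one-based indexing the constraining subspace should have dimension $n-k$ (so that its orthogonal complement has dimension $k$), and your intersection argument should be carried out with that convention to avoid an off-by-one error.
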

In the case of the normalized Laplacian, by considering the substitution $\mathbf{x}=D^{1/2}\mathbf{y}$,
\begin{equation*}
    \frac{\mathbf{x}^\top L \mathbf{x}}{\mathbf{x}^\top \mathbf{x}}= \frac{(D^{1/2}\mathbf{y})^\top L (D^{1/2}\mathbf{y})}{(D^{1/2}\mathbf{y})^\top (D^{1/2}\mathbf{y})}=\frac{\mathbf{y}^\top (D^{1/2}LD^{1/2}) \mathbf{y}}{\mathbf{y}^\top D \mathbf{y}},
\end{equation*}where
\begin{equation*}
    \mathbf{y}^\top (D^{1/2}LD^{1/2}) \mathbf{y}=\mathbf{y}^\top K \mathbf{y}=\mathbf{y}^\top (\mathcal{I}\mathcal{I}^\top)\mathbf{y}=(\mathbf{y}^\top\mathcal{I})(\mathbf{y}^\top\mathcal{I})^\top=\sum_{e\in E}\left(\sum_{v_i\in V}y_i\cdot C_{v_i,e}\right)^2.
\end{equation*}Therefore, by the min-max principle,
\begin{equation}\label{eq:minmaxL}
    \lambda_k(L)=\min_{\mathcal{X}^{n-k-1}}\left(\max_{\mathbf{y}\bot \mathcal{X}^{n-k-1},\, \mathbf{y}\neq 0} \frac{\sum_{e\in E}\biggl(\sum_{v_i\in V}y_i\cdot C_{v_i,e}\biggr)^2}{\sum_{v_i\in V} y_i^2\deg(v_i)}\right).
\end{equation}Similarly,
\begin{equation}\label{eq:minmaxK}
    \lambda_k(K)=\min_{\mathcal{X}^{n-k-1}}\left(\max_{\mathbf{y}\bot \mathcal{X}^{n-k-1},\, \mathbf{y}\neq 0} \frac{\sum_{e\in E}\biggl(\sum_{v_i\in V}y_i\cdot C_{v_i,e}\biggr)^2}{\sum_{v_i\in V} y_i^2}\right)
\end{equation}and 
\begin{equation}\label{eq:minmaxA}
    \lambda_k(A)=\min_{\mathcal{X}^{n-k-1}}\left(\max_{\mathbf{y}\bot \mathcal{X}^{n-k-1},\, \mathbf{y}\neq 0} \frac{\sum_{e\in E}\sum_{v_i,v_j\in V,\,i\neq j}\biggl(-y_i\cdot y_j\cdot C_{v_i,e}\cdot C_{v_j,e}\biggr)}{\sum_{v_i\in V} y_i^2}\right).
\end{equation}
\begin{remark}
By the above characterizations, it is clear that the eigenvalues of $L$ and $K$ are non-negative.
\end{remark}

\section{Cauchy interlacing}\label{section:Cauchy}

We recall the Cauchy interlacing Theorem (Theorem 4.3.17 in \cite{matrix}) and we apply it in order to prove interlacing results for $A$, $L$ and $K$ when vertices or edges are removed. 

\begin{theorem}[Cauchy interlacing Theorem] Let $Q$ be an $n\times n$ real symmetric matrix and let $P$ be an $(n-1)\times (n-1)$ principal sub-matrix of $Q$. Then
\begin{equation*}
        \lambda_{k}(Q)\leq \lambda_k(P)\leq \lambda_{k+1}(Q)\quad\text{for all }k\in\{1,\ldots,n-1\}.
    \end{equation*}
\end{theorem}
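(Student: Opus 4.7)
The plan is to derive both inequalities from the Courant--Fischer--Weyl min-max principle recalled in Section~\ref{section:MinMax}. A simultaneous permutation of the rows and columns of $Q$ preserves its spectrum and sends principal submatrices to principal submatrices, so I may assume without loss of generality that $P$ is obtained from $Q$ by deleting the last row and column. Identifying $\mathbb{R}^{n-1}$ with the hyperplane $W := \{\mathbf{x} \in \mathbb{R}^n : x_n = 0\}$, the matrix $P$ is exactly the restriction of the quadratic form $\mathbf{x}\mapsto \mathbf{x}^\top Q \mathbf{x}$ to $W$, so the Rayleigh quotients of $Q$ and $P$ agree on $W \setminus \{0\}$.

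For the lower bound $\lambda_k(Q) \leq \lambda_k(P)$, the min-max principle writes $\lambda_k(Q)$ as a minimum, over subspaces $T$ of $\mathbb{R}^n$ of a certain dimension, of the maximum of the Rayleigh quotient on $T$, and $\lambda_k(P)$ as the same expression but with $T$ additionally required to lie in $W$. Since the Rayleigh quotients agree on $W$ and the subspaces admissible for $P$ form a subfamily of those admissible for $Q$, the minimum over the smaller family is at least the minimum over the larger one. For the upper bound $\lambda_k(P) \leq \lambda_{k+1}(Q)$, I would apply the dual max-min form: $\lambda_{k+1}(Q)$ is a maximum over subspaces $T\subseteq \mathbb{R}^n$ of a suitable (larger) dimension of the minimum of the Rayleigh quotient on $T$, while $\lambda_k(P)$ is the same quantity taken only over subspaces of $W$. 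Again the admissible family for $P$ is a subfamily of that for $Q$, so the maximum for $P$ is at most the maximum for $Q$.

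The only subtle point is the dimension bookkeeping: the shift by one between the indices $\lambda_k(P)$ and $\lambda_{k+1}(Q)$ in the upper bound is exactly matched by the drop of one in ambient dimension when passing from $\mathbb{R}^n$ to $W$, so in both directions the two families of competing subspaces live in the same dimension and can be compared. Once this is set up correctly, both halves reduce to the elementary monotonicity of $\min$ and $\max$ under restriction to a subfamily, and no serious obstacle arises.
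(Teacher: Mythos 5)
Your argument is correct, but note that the paper does not prove this statement at all: it is quoted verbatim from Horn and Johnson (Theorem 4.3.17 of \emph{Matrix Analysis}) and used as a black box, so there is no internal proof to compare against. What you give is the standard Courant--Fischer proof, and the bookkeeping does check out: after reducing by a simultaneous permutation to the case where $P$ is $Q$ with the last row and column deleted, the Rayleigh quotient of $P$ is the restriction of that of $Q$ to the hyperplane $W=\{x_n=0\}$; then $\lambda_k(P)=\min_{\dim S=k,\,S\subseteq W}\max_{0\neq\mathbf{x}\in S}R_Q(\mathbf{x})\geq\lambda_k(Q)$ since the competing subspaces for $P$ form a subfamily, and dually $\lambda_k(P)=\max_{\dim S=n-k,\,S\subseteq W}\min_{0\neq\mathbf{x}\in S}R_Q(\mathbf{x})\leq\lambda_{k+1}(Q)$ because $\lambda_{k+1}(Q)$ is the same maximum over all $(n-k)$-dimensional subspaces of $\mathbb{R}^n$. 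The index shift in the upper bound is, as you say, exactly absorbed by the drop in ambient dimension. One small caution: if you want to phrase this using the min-max principle exactly as stated in Section~\ref{section:MinMax} of the paper (where one optimizes over vectors orthogonal to a subspace $\mathcal{X}^{n-k-1}$), be aware that that formulation inherits Butler's zero-based indexing of eigenvalues, so you would need to adjust the superscripts by one; your version with the standard one-based convention is the consistent one to use here.
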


\begin{corollary}Given $v\in V$,
\begin{enumerate}
    \item $\lambda_{k}(A(G))\leq \lambda_k(A(G\setminus v))\leq \lambda_{k+1}(A(G))$, for all $k\in\{1,\ldots,n-1\}$;
    \item $\lambda_{k}(K(G))\leq \lambda_k(K(G\setminus v))\leq \lambda_{k+1}(K(G))$, for all $k\in\{1,\ldots,n-1\}$;
    \item $\lambda_{k}(L(G))\leq \lambda_k(L(G\setminus v))\leq \lambda_{k+1}(L(G))$, for all $k\in\{1,\ldots,n-1\}$.
\end{enumerate}Given $e\in E$,
\begin{itemize}
    \item[4.] $\lambda_{k}(K(G))\leq \lambda_k(K(G\setminus e))\leq \lambda_{k+1}(K(G))$, for all $k\in\{1,\ldots,n-1\}$.
\end{itemize}
\end{corollary}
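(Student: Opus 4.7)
The plan is to realize each of the four claimed interlacings as an instance of the Cauchy interlacing theorem applied to a suitable (matrix, principal submatrix) pair. For parts 1, 2, and 3, I would first check that deleting a vertex $v$ does not alter the coefficients $C_{v_i,e}$ or the degrees $\deg(v_i)$ of any surviving vertex $v_i \neq v$: this is immediate from the definitions, since $v_i \in e \setminus \{v\}$ iff $v_i \in e$, and the coefficients attached to vertices other than $v$ are left untouched by the vertex deletion. Consequently $A(G \setminus v)$ and $D(G \setminus v)$, and hence also $K(G \setminus v) = D - A$ and $L(G \setminus v) = \mathrm{Id} - D^{-1/2} A D^{-1/2}$, coincide with the $(n-1) \times (n-1)$ principal submatrices of $A(G)$, $K(G)$, and $L(G)$ obtained by striking the row and column indexed by $v$. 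Cauchy interlacing then yields parts 1--3 immediately.

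Part 4 requires a slightly different idea, since $K(G \setminus e)$ and $K(G)$ have the same size $n \times n$, and so no direct principal-submatrix relation is available between them. I would pass to the dual Kirchhoff Laplacian $K^* = \mathcal{I}^\top \mathcal{I}$: deleting an edge $e$ removes exactly one column from $\mathcal{I}$, which in turn removes the row and column indexed by $e$ from $\mathcal{I}^\top \mathcal{I}$. Thus $K^*(G \setminus e)$ is an $(m-1) \times (m-1)$ principal submatrix of $K^*(G)$, and Cauchy supplies $\lambda_k(K^*(G)) \leq \lambda_k(K^*(G \setminus e)) \leq \lambda_{k+1}(K^*(G))$ for $k = 1,\ldots,m-1$. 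Invoking Remark \ref{rmk:K*}, which identifies the nonzero spectra of $K$ and $K^*$, I would then transfer this interlacing back to $K$ to obtain the statement in part 4.

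The most delicate step will be this final transfer: $K(G)$ lives in dimension $n$ whereas $K^*(G)$ lives in dimension $m$, so the two matrices carry different numbers of trivial zero eigenvalues, and the common rank may drop when the edge is removed. Matching the labels $\lambda_k$ across the identification of nonzero spectra therefore requires careful bookkeeping of the surplus zeros on each side, whereas parts 1--3 should be essentially immediate once the principal-submatrix property has been verified from the definitions.
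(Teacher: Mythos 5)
Your route is the paper's route. For parts 1--3 you do exactly what the paper does: check from the definitions that deleting $v$ leaves the coefficients and degrees of the surviving vertices unchanged, conclude that $A(G\setminus v)$, $K(G\setminus v)$ and $L(G\setminus v)$ are the principal submatrices of $A(G)$, $K(G)$ and $L(G)$ obtained by striking the row and column of $v$, and quote Cauchy. That part is complete and correct. For part 4 you also follow the paper: pass to $K^*=\mathcal{I}^\top\mathcal{I}$, observe that $K^*(G\setminus e)$ is an $(m-1)\times(m-1)$ principal submatrix of $K^*(G)$, apply Cauchy, and transfer back through the equality of non-zero spectra.

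The problem is that you stop exactly where the paper does: you flag the transfer back from $K^*$ to $K$ as ``delicate bookkeeping of the surplus zeros'' and then do not perform it (the paper likewise just says ``the claim follows''). If you carry it out, the inequality does not come out as stated. Both $K$ and $K^*$ are positive semidefinite with the same rank $r=\operatorname{rank}(\mathcal{I})$, so their ordered spectra agree \emph{from the top}: $\lambda_{n-j}(K(G))=\lambda_{m-j}(K^*(G))$ and $\lambda_{n-j}(K(G\setminus e))=\lambda_{m-1-j}(K^*(G\setminus e))$ for all admissible $j\geq 0$. Substituting $t=m-1-j$ into the Cauchy inequalities for $K^*$ and translating yields
\begin{equation*}
\lambda_{k}(K(G\setminus e))\;\leq\;\lambda_{k}(K(G))\;\leq\;\lambda_{k+1}(K(G\setminus e)),
\end{equation*}
i.e.\ the interlacing with the roles of $G$ and $G\setminus e$ reversed relative to part 4 as stated. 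This is forced anyway: $K(G)=K(G\setminus e)+\mathbf{c}_e\mathbf{c}_e^{\top}$ with $\mathbf{c}_e=(C_{v_i,e})_i$, a positive semidefinite rank-one perturbation, so Weyl's inequality gives $\lambda_k(K(G))\geq\lambda_k(K(G\setminus e))$ for every $k$, and the first inequality of part 4 could only ever hold with equality. A concrete failure satisfying the no-isolated-vertices assumption: $V=\{v_1,v_2,v_3\}$, $E=\{e_1,e_2\}$ with $e_1=\{v_1,v_2,v_3\}$, $e_2=\{v_1\}$ and all coefficients equal to $1$ gives $\lambda_2(K(G))=2-\sqrt{2}>0=\lambda_2(K(G\setminus e_2))$. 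So the step you deferred is not mere bookkeeping: doing it honestly corrects the direction of the interlacing, and your write-up needs to state and prove the reversed inequality rather than the one in the corollary.
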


\begin{proof}
Since $A(G\setminus v)$ is an $(n-1)\times (n-1)$ principal sub-matrix of $A(G)$, the first claim follows from the Cauchy interlacing Theorem. The second and the third claim are analogous. Similarly, since $K^*(G\setminus e)$ is an $(m-1)\times (m-1)$ principal sub-matrix of $K^*(G)$, by the Cauchy interlacing Lemma we have that
\begin{equation*}
     \lambda_{t}(K^*(G))\leq \lambda_t(K^*(G\setminus e))\leq \lambda_{t+1}(K^*(G))\quad\text{for all }t\in\{1,\ldots,m-1\}.
\end{equation*}Since $K$ and $K^*$ have the same non-zero eigenvalues for any hypergraph (cf.\ Remark \ref{rmk:K*}), the claim follows.
\end{proof}

We now apply the Cauchy interlacing Theorem in order to prove the following

\begin{theorem}\label{thm:QM}
Let $Q$ be an $n\times n$ real symmetric matrix, let $M$ be an $m\times m$ real symmetric matrix and assume that there exists a principal sub-matrix of both $Q$ and $M$ of size $n-r=m-l$. Then,
\begin{equation*}
    \lambda_{k-l}(Q)\leq \lambda_k(M)\leq \lambda_{k+r}(Q), \quad\text{for all }k\in\{l+1,\ldots,n-r\}.
    \end{equation*}
\end{theorem}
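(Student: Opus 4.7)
The plan is to reduce the statement to the Cauchy interlacing theorem applied twice, using the common principal sub-matrix as an intermediary. Let $P$ denote the common principal sub-matrix of both $Q$ and $M$ of size $s := n - r = m - l$.

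First, I would establish an iterated version of the Cauchy interlacing theorem: if $P$ is a principal sub-matrix of an $n \times n$ symmetric matrix $Q$ and $P$ has size $n - r$, then
\begin{equation*}
  \lambda_j(Q) \;\leq\; \lambda_j(P) \;\leq\; \lambda_{j+r}(Q) \quad \text{for all } j \in \{1, \ldots, n - r\}.
\end{equation*}
This follows by deleting one row/column at a time and applying the stated Cauchy interlacing theorem $r$ times: each single deletion shifts the admissible upper-bound index by one, so after $r$ deletions the index shifts by $r$. (This is a standard bookkeeping argument; the two inequalities $\lambda_j(Q) \leq \lambda_j(P)$ and $\lambda_j(P) \leq \lambda_{j+r}(Q)$ follow by induction on $r$.)

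Next, I would apply this iterated form to the pair $(Q, P)$ and to the pair $(M, P)$. From $(Q,P)$ we get, for every $j \in \{1,\ldots, s\}$,
\begin{equation*}
  \lambda_j(Q) \;\leq\; \lambda_j(P) \;\leq\; \lambda_{j+r}(Q),
\end{equation*}
and from $(M,P)$ we get, for every $j \in \{1,\ldots, s\}$,
\begin{equation*}
  \lambda_j(M) \;\leq\; \lambda_j(P) \;\leq\; \lambda_{j+l}(M).
\end{equation*}

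Finally I would chain these. For the upper bound, given $k$ in the stated range, combine $\lambda_k(M) \leq \lambda_k(P)$ (from the second chain, valid since $k \leq n-r = s$) with $\lambda_k(P) \leq \lambda_{k+r}(Q)$ (from the first chain) to get $\lambda_k(M) \leq \lambda_{k+r}(Q)$. For the lower bound, set $j := k - l$; the condition $k \geq l+1$ gives $j \geq 1$, and $k \leq n - r$ gives $j \leq s$, so both chains apply at index $j$. Then $\lambda_{k-l}(Q) = \lambda_j(Q) \leq \lambda_j(P) \leq \lambda_{j+l}(M) = \lambda_k(M)$, as required.

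The only genuine content is the iterated Cauchy interlacing in the first step; the rest is index bookkeeping. The main obstacle I anticipate is making sure every index in the chain lies in the admissible range $\{1, \ldots, s\}$ of $P$ and in the admissible ranges of $Q$ and $M$ — the hypothesis $k \in \{l+1, \ldots, n-r\}$ is precisely what guarantees this.
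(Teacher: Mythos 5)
Your proposal is correct and follows essentially the same route as the paper: both prove the iterated Cauchy interlacing bounds $\lambda_j(Q)\leq\lambda_j(P)\leq\lambda_{j+r}(Q)$ and $\lambda_j(M)\leq\lambda_j(P)\leq\lambda_{j+l}(M)$ by repeated application of the one-step theorem, then chain them through the common principal sub-matrix $P$. The only cosmetic difference is that the paper writes the two bounds as a single combined chain of inequalities, while you handle the upper and lower bounds separately; the index bookkeeping is equivalent.
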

\begin{proof}Let $P$ be a principal sub-matrix of both $Q$ and $M$, of size $n-r=m-l$. By repeatedly applying the Cauchy interlacing Theorem,
\begin{equation*}
\lambda_{j}(Q)\leq \lambda_j(P)\leq \lambda_{j+r}(Q)\quad\text{for all }j\in\{1,\ldots,n-r\}
\end{equation*}and
\begin{equation*}
\lambda_{j}(M)\leq \lambda_j(P)\leq \lambda_{j+l}(M)\quad\text{for all }j\in\{1,\ldots,m-l\}.
\end{equation*}Therefore,
\begin{equation*}
    \lambda_{j}(Q)\leq \lambda_j(P)\leq \lambda_{j+l}(M)\leq \lambda_{j+l}(P)\leq  \lambda_{j+l+r}(Q),
\end{equation*}for all $j\in\{1,\ldots,n-l-r\}$. Hence,
\begin{equation*}
    \lambda_{k-l}(Q)\leq \lambda_k(M)\leq \lambda_{k+r}(Q), \quad\text{for all }k\in\{l+1,\ldots,n-r\}.
\end{equation*}

\end{proof}

\begin{corollary}\label{cor:edges}Given $F\subseteq E$ such that $\left.G\right|_F$ has $t$ vertices,
\begin{equation}\label{eq:tA}
        \lambda_{k-t+1}(A(G))\leq \lambda_k(A(G\setminus F))\leq \lambda_{k+t-1}(A(G))
    \end{equation}for all $k\in\{t,\ldots,n-(t-1)\}$, and
    \begin{equation}\label{eq:tL}
        \lambda_{k-t}(L(G))\leq \lambda_k(L(G\setminus F))\leq \lambda_{k+t}(L(G)),
    \end{equation}for all $k\in\{t+1,\ldots,n-t\}$.
\end{corollary}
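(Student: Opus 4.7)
The plan is to apply Theorem~\ref{thm:QM} in each case, after identifying a suitable common principal submatrix of the two operators. Let $V_F$ denote the vertex set of $\left.G\right|_F$, so that $|V_F|=t$, and note that by~\eqref{eq:zerocoeff} one has $C_{v,e}=0$ whenever $v\in V\setminus V_F$ and $e\in F$.

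For the normalized Laplacian, I would first check that for any $u,u'\in V\setminus V_F$ both $\deg(u)$ and $A(G)_{u,u'}$ coincide in $G$ and in $G\setminus F$, since only edges of $F$ are removed and none of those edges meets $u$ or $u'$; together with $L(G)_{u,u}=L(G\setminus F)_{u,u}=1$, this shows that the $(n-t)\times(n-t)$ principal submatrix indexed by $V\setminus V_F$ is common to $L(G)$ and $L(G\setminus F)$. Applying Theorem~\ref{thm:QM} with $n=m$ and $r=l=t$ then yields~\eqref{eq:tL} directly.

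For the adjacency matrix, the same subset already produces a common principal submatrix of size $n-t$, but here one can gain one extra vertex. Since the diagonal of $A$ is identically zero, and since for $u\neq u'$ the off-diagonal difference $A(G)_{u,u'}-A(G\setminus F)_{u,u'}=-\sum_{e\in F}C_{u,e}\,C_{u',e}$ already vanishes as soon as at least one of $u,u'$ lies outside $V_F$, I can adjoin any single $v\in V_F$ to $V\setminus V_F$ and verify that the resulting $(n-t+1)\times(n-t+1)$ principal submatrix indexed by $(V\setminus V_F)\cup\{v\}$ is still common to $A(G)$ and $A(G\setminus F)$. Theorem~\ref{thm:QM} with $r=l=t-1$ then delivers~\eqref{eq:tA}.

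The step I expect to require the most care is noticing this one-vertex enlargement for $A$: it rests on the zero diagonal of $A$ together with the observation that the off-diagonal differences vanish as soon as either vertex lies outside $V_F$. No analogous gain is available for $L$, whose off-diagonal entries are normalised by $\sqrt{\deg(u)\deg(u')}$ and whose degrees strictly decrease at every vertex of $V_F$ once $F$ is removed. This asymmetry is precisely what produces the shift $t-1$ in~\eqref{eq:tA} against $t$ in~\eqref{eq:tL}.
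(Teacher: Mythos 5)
Your proof is correct and takes essentially the same approach as the paper: the paper likewise exhibits $L(G\setminus\{w_1,\ldots,w_t\})$ as a common principal submatrix of $L(G)$ and $L(G\setminus F)$, and $A(G\setminus\{w_1,\ldots,w_{t-1}\})$ (i.e.\ the submatrix keeping one vertex of $V_F$, exactly your one-vertex enlargement) as a common principal submatrix of $A(G)$ and $A(G\setminus F)$, then invokes Theorem~\ref{thm:QM}. Your write-up merely makes explicit the entrywise verifications that the paper leaves implicit.
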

\begin{proof}
Let $w_1,\ldots,w_t$ be the vertices of $G|_F$. Then, $A(G\setminus\{w_1,\ldots,w_{t-1}\})$ is a principal sub-matrix of both $A(G)$ and $A(G\setminus F)$. Similarly, $L(G\setminus\{w_1,\ldots,w_{t}\})$ is a principal sub-matrix of both $L(G)$ and $L(G\setminus F)$. The claim follows by Theorem \ref{thm:QM}.
\end{proof}
\begin{remark}
Corollary \ref{cor:edges} is accurate in the sense that it is not possible to substitute, in the claim, the inequalities in \eqref{eq:tA} by 
    \begin{equation*}
        \lambda_{k-t+2}(A(G))\leq \lambda_k(A(G\setminus F))\leq \lambda_{k+t-2}(A(G)),
    \end{equation*}and the inequalities in \eqref{eq:tL} by
    \begin{equation*}
        \lambda_{k-t+1}(L(G))\leq \lambda_k(L(G\setminus F))\leq \lambda_{k+t-1}(L(G)).
    \end{equation*}The accuracy for $A$ can be easily seen by considering the case where $F$ consists of a single \emph{loop} $\ell$ (that is, an edge containing one single vertex). Clearly, removing one loop from the hypergraph does not change its adjacency matrix and therefore the inequalities in \eqref{eq:tA}, in this case, can be re-written as
    \begin{equation*}
        \lambda_{k}(A(G))\leq \lambda_k(A(G))\leq \lambda_{k}(A(G)).
    \end{equation*}The accuracy of \eqref{eq:tL} is shown by the next example.
    \end{remark}

    \begin{ex}\label{ex:loop}
    Let $G:=(V,E,\mathcal{C})$ be such that (Fig. \ref{fig:loop}):
    \begin{itemize}
        \item $V=\{v_1,v_2,v_3\}$;
        \item $E=\{e_1,\ell\}$, where $e_1=\{v_1,v_2,v_3\}$ and $\ell=\{v_1\}$;
        \item $\mathcal{C}_{v,e}=1$ for each $e\in E$ and each $v\in e$.
    \end{itemize}
      \begin{figure}[tbh]
    \centering
\includegraphics[width=8cm]{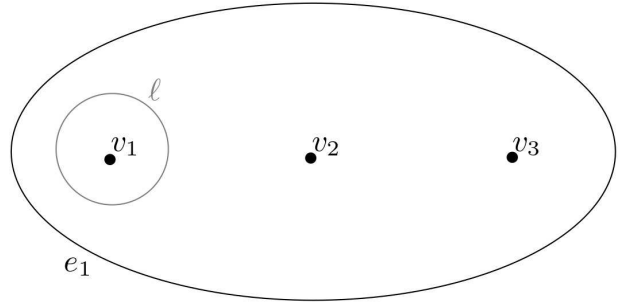}    \caption{The hypergraph in Example \ref{ex:loop}.}
    \label{fig:loop}
\end{figure}
    Then, 
    \begin{equation*}
        D(G)=\begin{pmatrix}
            2 & 0 & 0\\
           0 & 1 & 0\\
            0 & 0 & 1\\
        \end{pmatrix}, \quad  A(G)=\begin{pmatrix}
            0 & -1 & -1\\
           - 1 & 0 & -1\\
            -1 & -1 & 0\\
        \end{pmatrix}
            \end{equation*}
  and therefore
        \begin{equation*}
 L(G)=\begin{pmatrix}
            1 & 1/\sqrt{2} &  1/\sqrt{2}\\
             1/\sqrt{2} & 1 & 1\\
             1/\sqrt{2} & 1 & 1\\
        \end{pmatrix}.
    \end{equation*}Hence,
    \begin{equation*}
        \lambda_1(L(G))=0,\quad\lambda_2(L(G))=\frac{3-\sqrt{5}}{2},\quad \lambda_3(L(G))=\frac{3+\sqrt{5}}{2};
    \end{equation*}while
     \begin{equation*}
 L(G\setminus \ell)=\begin{pmatrix}
            1 & 1 &  1\\
             1 & 1 & 1\\
             1 & 1 & 1\\
        \end{pmatrix},
    \end{equation*}therefore
    \begin{equation*}
        \lambda_1(L(G\setminus \ell))=0,\quad \lambda_2(L(G\setminus \ell))=0,\quad \lambda_3(L(G\setminus \ell))=3.
    \end{equation*}In particular,
    \begin{equation*}
        \lambda_3(L(G\setminus \ell))>\lambda_3(L(G))
    \end{equation*}and
    \begin{equation*}
         \lambda_2(L(G\setminus \ell))<\lambda_2(L(G)).
    \end{equation*}This shows the accuracy of Corollary \ref{cor:edges} for $L$.
    \end{ex}
\section{Alternative interlacing for the normalized Laplacian}\label{Butler}
In the case when we remove an edge that is not a loop from a hypergraph, we can improve Corollary \ref{cor:edges} by partly generalizing, for $L$, Theorem 1.2 in \cite{Butler}.
\begin{theorem}\label{thm:Butler}
Given $\hat{e}\in E$ of cardinality $t\geq 2$,
    \begin{equation}\label{eq:hat}
        \lambda_{k-t+1}(L(G))\leq \lambda_k(L(G\setminus \hat{e})),
    \end{equation}for all $k\in\{t,\ldots,n\}$. More generally, given $F\subseteq E$ such that $|e|\geq 2$ for each $e\in F$ and $\sum_{e\in F}|e|=t$,
    \begin{equation*}
        \lambda_{k-t+|F|}(L(G))\leq \lambda_k(L(G\setminus F)).
    \end{equation*}
\end{theorem}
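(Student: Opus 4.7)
The plan is to use the min--max (Courant) form of the principle and construct an explicit witness subspace on which the Rayleigh quotient of $L(G)$ is controlled by $\lambda_k(L(G\setminus F))$; the subspace is obtained from the first $k$ eigendirections of $L(G\setminus F)$ by imposing one linear constraint per removed edge.

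First I would write
\begin{equation*}
\lambda_k(L(G\setminus F))=\min_{\dim V=k}\;\max_{\mathbf y\in V\setminus\{0\}}\;R_{L(G\setminus F)}(\mathbf y),
\end{equation*}
where $R_{L(G\setminus F)}$ is the Rayleigh quotient from \eqref{eq:minmaxL} with $E\setminus F$ and $\deg_{G\setminus F}$ in place of $E$ and $\deg_G$. Taking $V^*$ to be the span of the first $k$ generalized eigenvectors of the pencil $(K(G\setminus F),D(G\setminus F))$ (padded by zero on any vertex that becomes isolated in $G\setminus F$) gives a $k$-dimensional witness with $R_{L(G\setminus F)}(\mathbf y)\leq\lambda_k(L(G\setminus F))$ for every nonzero $\mathbf y\in V^*$.

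Next I would impose on $V^*$ the linear constraint $\alpha_e(\mathbf y):=\sum_i y_i C_{v_i,e}=0$ for each $e\in F$. This is at most $|F|$ independent linear constraints, so the resulting subspace $V\subseteq V^*$ satisfies $\dim V\geq k-|F|\geq k-t+|F|$, where the second inequality uses $t=\sum_{e\in F}|e|\geq 2|F|$ since $|e|\geq 2$ for every $e\in F$. For any $\mathbf y\in V$,
\begin{equation*}
R_{L(G)}(\mathbf y)=\frac{\sum_{e\in E\setminus F}\alpha_e(\mathbf y)^2+\sum_{e\in F}\alpha_e(\mathbf y)^2}{\sum_i y_i^2\deg_{G\setminus F}(v_i)+\sum_{e\in F}\sum_{v_i\in e}y_i^2 C_{v_i,e}^2},
\end{equation*}
and the constraints zero out the $F$-part of the numerator while only enlarging the denominator by a nonnegative amount. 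Hence $R_{L(G)}(\mathbf y)\leq R_{L(G\setminus F)}(\mathbf y)\leq\lambda_k(L(G\setminus F))$, and using any $(k-t+|F|)$-dimensional subspace of $V$ as a competitor in the min--max characterization of $\lambda_{k-t+|F|}(L(G))$ gives the desired inequality.

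The main obstacle will be the degenerate case in which removing $F$ creates isolated vertices, so that $L(G\setminus F)$ genuinely lives on a space of dimension $n'<n$; one has to be careful about extending its eigenvectors to $\mathbb{R}^n$ by zero, checking that the Rayleigh-quotient identities above remain valid (isolated vertices contribute zero to both numerator and denominator of $R_{L(G\setminus F)}$), and interpreting the statement when $k>n'$. A side remark: the argument in fact yields the stronger bound $\lambda_{k-|F|}(L(G))\leq\lambda_k(L(G\setminus F))$, from which the stated theorem follows via $k-|F|\geq k-t+|F|$, and Butler's original theorem in \cite{Butler} is the special case $|F|=1$, $t=2$.
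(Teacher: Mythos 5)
Your proof is correct and rests on the same mechanism as the paper's: the Courant--Fischer principle applied to the Rayleigh quotient $\sum_{e}\bigl(\sum_i y_iC_{v_i,e}\bigr)^2/\sum_i y_i^2\deg(v_i)$, together with linear constraints that annihilate the removed edges' contribution to the numerator, plus the observation that passing from $\deg_{G\setminus F}$ back to $\deg_G$ only enlarges the denominator. Two differences are worth recording. First, you use the dual form of min--max (a $k$-dimensional witness subspace spanned by eigenvectors of the pencil $(K(G\setminus F),D(G\setminus F))$, then cut down by the constraints), whereas the paper augments the $(n-k-1)$-dimensional constraint space $\mathcal{X}$ by an extra set $\mathcal{Z}$ of vectors; these are equivalent bookkeeping choices. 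Second, and more substantively, you impose a single constraint $\sum_i y_iC_{v_i,e}=0$ per removed edge, i.e.\ $|F|$ constraints in total, while the paper's $\mathcal{Z}$ consists of $|\hat e|-1$ vectors per edge, pinning down all of $y_2,\dots,y_t$ in terms of $y_1$ --- far more than is needed to kill the single linear form $\sum_i y_iC_{v_i,\hat e}$. As you note, your count yields the sharper bound $\lambda_{k-|F|}(L(G))\le\lambda_k(L(G\setminus F))$, which implies the stated $\lambda_{k-t+|F|}$ bound via $t\ge 2|F|$ and is strictly stronger whenever some removed edge has cardinality greater than $2$; for $t=2$ the two coincide, so the paper's tightness example (Example \ref{ex:Butler}) is consistent with your improvement. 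The isolated-vertex degeneracy you flag (removing $F$ may make $D(G\setminus F)$ singular, so that $L(G\setminus F)$ is not defined as an $n\times n$ matrix and the denominators above can vanish) is real, but it is equally unaddressed in the paper's proof; it is a hypothesis that should be added to the theorem rather than a gap specific to your argument.
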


\begin{proof}
 Up to re-labeling of the vertices, assume that $\hat{e}=\{v_1,\ldots,v_t\}$ and let 
\begin{equation*}
    \mathcal{Z}:=\biggl\{\mathbf{e}_2-\mathbf{e}_1\cdot \left(\frac{C_{v_1,\hat{e}}}{(t-1)C_{v_2,\hat{e}}}\right),\ldots, \mathbf{e}_t-\mathbf{e}_1\cdot \left(\frac{C_{v_1,\hat{e}}}{(t-1)C_{v_t,\hat{e}}}\right) \biggr\}
\end{equation*}where $\mathbf{e}_1,\ldots,\mathbf{e}_t$ are the first $t$ standard unit vectors in $\mathbb{R}^n$ and therefore the condition $\mathbf{y}\bot \mathcal{Z}$ implies that
\begin{equation*}
    \sum_{v_i\in V}y_i\cdot C_{v_i,\hat{e}}=y_1\cdot C_{v_1,\hat{e}}+y_2\cdot C_{v_2,\hat{e}}+\ldots+y_t\cdot C_{v_t,\hat{e}}=y_1\cdot C_{v_1,\hat{e}}-(t-1)\cdot y_1\cdot \frac{C_{v_1,\hat{e}}}{(t-1)}=0.
\end{equation*}By \eqref{eq:minmaxL}, we have
\begin{align*}
    \lambda_k(L(G\setminus \hat{e}))&=\min_{\mathcal{X}^{n-k-1}}\left(\max_{\mathbf{y}\bot \mathcal{X}^{n-k-1},\, \mathbf{y}\neq 0} \frac{\sum_{e\in E\setminus \hat{e}}\biggl(\sum_{i\in V}y_i\cdot C_{v_i,e}\biggr)^2}{\sum_{v_i\in V} y_i^2\deg(v_i)-\sum_{v_i\in \hat{e}}y_i^2\cdot C_{v_i,\hat{e}}^2}\right)\\
    &=\min_{\mathcal{X}^{n-k-1}}\left(\max_{\mathbf{y}\bot \mathcal{X}^{n-k-1},\, \mathbf{y}\neq 0} \frac{\sum_{e\in E}\biggl(\sum_{v_i\in V}y_i\cdot C_{v_i,e}\biggr)^2-\biggl(\sum_{v_i\in V}y_i\cdot C_{v_i,\hat{e}}\biggr)^2}{\sum_{v_i\in V} y_i^2\deg(v_i)-\sum_{v_i\in \hat{e}}y_i^2\cdot C_{v_i\hat{e}}^2}\right)\\
    &\geq \min_{\mathcal{X}^{n-k-1}}\left(\max_{\mathbf{y}\bot \mathcal{X}^{n-k-1},\,\mathbf{y}\bot\mathcal{Z},\, \mathbf{y}\neq 0} \frac{\sum_{e\in E}\biggl(\sum_{v_i\in V}y_i\cdot C_{v_i,e}\biggr)^2}{\sum_{v_i\in V} y_i^2\deg(v_i)-\sum_{v_i\in \hat{e}}y_i^2\cdot C_{v_i,\hat{e}}^2}\right)\\
    &\geq \min_{\mathcal{X}^{n-k-1}}\left(\max_{\mathbf{y}\bot \mathcal{X}^{n-k-1},\,\mathbf{y}\bot\mathcal{Z},\, \mathbf{y}\neq 0} \frac{\sum_{e\in E}\biggl(\sum_{v_i\in V}y_i\cdot C_{v_i,e}\biggr)^2}{\sum_{v_i\in V} y_i^2\deg(v_i)}\right)\\
    &\geq \min_{\mathcal{X}^{n-k+t-2}}\left(\max_{\mathbf{y}\bot \mathcal{X}^{n-k+t-2},\, \mathbf{y}\neq 0} \frac{\sum_{e\in E}\biggl(\sum_{v_i\in V}y_i\cdot C_{v_i,e}\biggr)^2}{\sum_{v_i\in V} y_i^2\deg(v_i)}\right)\\
    &=\lambda_{k-t+1}(L(G)).
\end{align*}In the third line we added the condition $\mathbf{y}\bot\mathcal{Z}$, that makes the second term of the numerator vanish and that restricts the maximum over a smaller set. In the fifth line we considered an optimization that includes the one in the fourth line as particular case. This proves the claim.
\end{proof}

As shown by the next example, Theorem \ref{thm:Butler} is accurate in the sense that it is not possible to substitute, in the claim, the inequality \eqref{eq:hat} by 
    \begin{equation*}
        \lambda_{k-t+2}(L(G))\leq \lambda_k(L(G\setminus \hat{e}))
    \end{equation*}which becomes, for $|\hat{e}|=t=2$,
    \begin{equation*}
        \lambda_{k}(L(G))\leq \lambda_k(L(G\setminus \hat{e})).
    \end{equation*}

    \begin{ex}\label{ex:Butler}
    
    Let $G$ be the simple graph on $7$ nodes in Fig. \ref{fig:JMM}. By Theorem 2.1 and Theorem 3.1 in \cite{JMM}, 
\begin{equation*}
    \lambda_7(L(G))>\frac{4}{3}=\lambda_7(L(G\setminus\hat{e})).
\end{equation*}This shows the accuracy of Theorem \ref{thm:Butler}.
    \end{ex}
    
    \begin{figure}[tbh]
    \centering
\includegraphics[width=6cm]{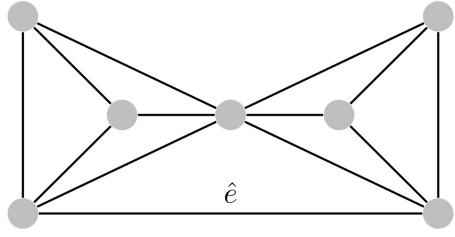}    \caption{The simple graph in Example \ref{ex:Butler}.}
    \label{fig:JMM}
\end{figure}

%The proof method of Theorem \ref{thm:Butler} can be used also for proving the following\begin{theorem}\end{theorem}

Now, while Theorem \ref{thm:Butler} considers the removal of edges of cardinality $\geq 2$, the following result is about the removal of loops.
\begin{proposition}If $\ell\in E$ is a loop, then
\begin{equation*}
    \lambda_k(G\setminus \ell)\geq \lambda_k (G)
\end{equation*}for all $k$ such that $\lambda_k (G)\geq 1$, and
\begin{equation*}
    \lambda_k(G\setminus \ell)\leq \lambda_k (G)
\end{equation*}for all $k$ such that $\lambda_k (G)\leq 1$.
\end{proposition}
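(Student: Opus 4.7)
The plan is to exploit the fact that removing a loop $\ell$ attached to a single vertex (which we label $v_1$) leaves the adjacency matrix $A(G)$ unchanged --- since $\ell$ only contributes to the diagonal entry $A_{11}$, which is forced to be $0$ --- and lowers only the degree of $v_1$, by the amount $c^{2}:=C_{v_1,\ell}^{2}>0$. All the spectral change is therefore captured by a one-term perturbation of both the numerator and the denominator of the Rayleigh quotient appearing in~\eqref{eq:minmaxL}.

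First I would introduce
\begin{equation*}
 N(\mathbf{y}):=\sum_{e\in E}\biggl(\sum_{v_i\in V} y_i\,C_{v_i,e}\biggr)^{2},\qquad D(\mathbf{y}):=\sum_{v_i\in V} y_i^{2}\deg(v_i),
\end{equation*}
and observe that the contribution of $\ell$ to $N(\mathbf{y})$ is exactly $c^{2}y_1^{2}$, and its contribution to $D(\mathbf{y})$ is also $c^{2}y_1^{2}$. Writing $R(\mathbf{y})=N(\mathbf{y})/D(\mathbf{y})$ for the Rayleigh quotient of $L(G)$, the Rayleigh quotient of $L(G\setminus\ell)$ is then $R'(\mathbf{y})=(N(\mathbf{y})-c^{2}y_1^{2})/(D(\mathbf{y})-c^{2}y_1^{2})$, and a short calculation yields the key identity
\begin{equation*}
 R'(\mathbf{y})-R(\mathbf{y})=\frac{c^{2}y_1^{2}\bigl(R(\mathbf{y})-1\bigr)}{D(\mathbf{y})-c^{2}y_1^{2}}.
\end{equation*}
The no-isolated-vertex assumption applied to $G\setminus\ell$ makes the right-hand denominator $\sum_{i}y_i^{2}\deg_{G\setminus\ell}(v_i)$ strictly positive for $\mathbf{y}\neq\mathbf{0}$, so the sign of $R'(\mathbf{y})-R(\mathbf{y})$ coincides with the sign of $R(\mathbf{y})-1$: if $R(\mathbf{y})\geq 1$ then $R'(\mathbf{y})\geq R(\mathbf{y})$, and if $R(\mathbf{y})\leq 1$ then $R'(\mathbf{y})\leq R(\mathbf{y})$.

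The second step feeds this pointwise comparison into the two forms of the min-max principle. Suppose $\lambda_k(L(G))\geq 1$; I would pick a subspace $U^{\star}$ of the appropriate dimension realizing the maximum in the max-min characterization of $\lambda_k(L(G))$, so that $R(\mathbf{y})\geq \lambda_k(L(G))\geq 1$ for every nonzero $\mathbf{y}\in U^{\star}$. Then $R'(\mathbf{y})\geq R(\mathbf{y})\geq \lambda_k(L(G))$ on $U^{\star}$, and passing first to the minimum over $U^{\star}$ and then to the maximum over all admissible subspaces gives $\lambda_k(L(G\setminus\ell))\geq \lambda_k(L(G))$. The case $\lambda_k(L(G))\leq 1$ is symmetric: I would use an optimal subspace witnessing the min-max form, which forces $R(\mathbf{y})\leq \lambda_k(L(G))\leq 1$ on it, so that $R'(\mathbf{y})\leq R(\mathbf{y})\leq \lambda_k(L(G))$, yielding the reverse inequality after taking the max on that subspace and the min over admissible subspaces.

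I expect the main obstacle to be essentially bookkeeping: verifying the identity for $R'(\mathbf{y})-R(\mathbf{y})$ and ensuring that the denominator $D(\mathbf{y})-c^{2}y_1^{2}$ stays strictly positive --- which, as noted, forces the assumption that $G\setminus\ell$ still has no isolated vertex --- and matching the indexing of the subspaces $\mathcal{X}^{j}$ in the min-max formulas so that the two cases combine cleanly into the stated dichotomy around the threshold $1$.
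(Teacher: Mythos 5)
Your proposal is correct and takes essentially the same route as the paper: both arguments rest on the observation that the loop contributes the same quantity $C_{v_1,\ell}^{2}y_1^{2}$ to the numerator and denominator of the Rayleigh quotient in \eqref{eq:minmaxL}, so that its removal pushes the quotient away from $1$, combined with the min-max principle. The paper compresses this into the one-line remark that adding the same non-negative quantity to numerator and denominator moves a fraction toward $1$, whereas your explicit identity for $R'(\mathbf{y})-R(\mathbf{y})$ and your careful selection of optimal subspaces in each of the two cases merely spells out the same argument in more detail.
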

%\textcolor{orange}{This implies, in particular, that $1$ is an eigenvalue for $G$ if and only if $1$ is an eigenvalue for $g\setminus l$. But as long as no vertices are left isolated.}

\begin{proof}Assume that $\ell=\{v_1\}$ and $\lambda_k(G)\geq 1$. By \eqref{eq:minmaxL}, we have
\begin{align*}
    \lambda_k(L(G\setminus \ell))&=\min_{\mathcal{X}^{n-k-1}}\left(\max_{\mathbf{y}\bot \mathcal{X}^{n-k-1},\, \mathbf{y}\neq 0} \frac{\sum_{e\in E}\biggl(\sum_{v_i\in V}y_i\cdot C_{v_i,e}\biggr)^2-y_1^2\cdot C_{v_1,\ell}^2}{\sum_{v_i\in V} y_i^2\deg(v_i)-y_1^2\cdot C_{v_1,\ell}^2}\right)\\
    &\geq \min_{\mathcal{X}^{n-k-1}}\left(\max_{\mathbf{y}\bot \mathcal{X}^{n-k-1},\, \mathbf{y}\neq 0} \frac{\sum_{e\in E}\biggl(\sum_{v_i\in V}y_i\cdot C_{v_i,e}\biggr)^2}{\sum_{v_i\in V} y_i^2\deg(v_i)}\right)\\
    &=\lambda_{k}(L(G)),
\end{align*}since adding the same non-negative quantity to the numerator and to the denominator makes the resultant fraction closer to $1$. This proves the first claim. The proof of the second claim is analogous.
\end{proof}

\section{Conclusions}\label{section:last}
We have shown that, if the structure of a hypergraph with real coefficients is perturbed by removing (or adding) vertices and edges, then the eigenvalues of the perturbed hypergraph interlace those of the original hypergraph. We have proved various inequalities for each operator that we considered (adjacency matrix, Kirchhoff Laplacian and normalized Laplacian), and we have shown tightness of the inequalities. These results are in line with intuition, because the spectra of the operators associated to a hypergraph encode important structural properties of the hypergraph. For future directions, it will be interesting to apply these interlacing results to problems arising in both pure mathematics and applied network analysis.\newline

%\section*{Acknowledgments}
\textbf{Acknowledgments.} \newline
The author would like to thank Tetsuo Hatsuda (RIKEN iTHEMS) and the organizers of the International Conference on Blockchains and their Applications (Kyoto 2021) for the invitation. This work was supported by The Alan Turing Institute under the EPSRC grant EP/N510129/1.

\end{document}